\theoremstyle{plain}
 \newtheorem{theorem}{Theorem}
 \newtheorem{prop}{Proposition}
 \newtheorem{lemma}{Lemma}
\theoremstyle{definition}
 \newtheorem{dfn}{Definition}
 \newtheorem{question}{Question}
\theoremstyle{remark}
 \numberwithin{equation}{section}
\renewcommand{\le}{\leqslant}
\renewcommand{\ge}{\geqslant}
\title{Relatively Hyperbolic Coxeter Groups with Maximal Flats of codimension 1}
\author{Giang Le}
\address{
Department of Mathematics \\ 
Ohio State University\\ 
Columbus, OH 43210\\
USA} \email{le@math.osu.edu}
\begin{document}

\begin{abstract}
We study relatively hyperbolic Coxeter groups of type $HM$ with maximal Euclidean Coxeter subgroups of codimension 1. Our main result in this paper is that the dimension of these groups is
bounded above.
\end{abstract}

\maketitle
\section{Introduction}
Let $(W,S)$ be a Coxeter system. $W$ is called Coxeter group of {\em type $HM^n$} if $W$ has an
effective, proper and cocompact action on some contractible manifold of
dimension $n$, in this case, $n$ is called the {\em dimension} of $W$. We say a Coxeter group of type $HM$ if it is of type $HM^n$ for
some $n$. Examples of Coxeter groups of type $HM$ are compact reflection groups on
$\mathbb{H}^n$. In \cite{PV} Potyagailo and Vinberg give the bound for the dimension of
right-angled Coxeter groups which have fundamental domain of finite volume and construct examples of those groups up
to dimension 8. From those examples if one truncate the vertices at $\infty$ and
introduces new reflecting faces, corresponding to those truncated vertices, one
can obtain new Coxeter groups of type HM. By Caprace's criterion \cite{Caprace}
these groups are relatively hyperbolic relative to their affine special
subgroups of codimension 1 (if all maximal affine subgroups of a Coxeter group
are of codimesion 1, we say that the group has maximal flats of codimesion 1). A question we can ask about relatively hyperbolic Coxeter groups of type
$HM$ with maximal flats of codimension 1 is that
\begin{question} \label{question:dimension}
Is there an upper bound on the dimension $n$ of relatively hyperbolic Coxeter groups of type $HM^n$ with maximal affine subgroups of codimension 1?
\end{question}
This question is inspired by the example of relatively hyperbolic Coxeter groups
of type $HM$ described above and the result of the paper \cite{PV}. In addition, the boundedness
of dimension is also proven for reflection groups in hyperbolic space which have fundamental domain of finite volume (\cite{Prokhorov}). In this paper we prove some boundedness properties for relatively hyperbolic Coxeter groups
of type $HM$ with maximal flats of codimension 1. In particular, we will prove the two
following theorems:
\begin{theorem}\label{theorem:rightangled_case}
Let $W$ be a right-angled Coxeter group of type $HM$ with maximal Euclidean Coxeter subgroups of codimension 1. Then the dimension of $W$ does not exceed 14.
\end{theorem}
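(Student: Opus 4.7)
The plan is to reduce the theorem to the Potyagailo--Vinberg bound \cite{PV} on finite-covolume right-angled hyperbolic Coxeter groups, by passing from $W$ to a word-hyperbolic right-angled Coxeter group $W'$ of the same $HM$-dimension that acts as a reflection group on $\mathbb{H}^n$ with fundamental polytope of finite volume.

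First, I would translate the hypotheses into the language of the nerve $L = L(W,S)$. Because $W$ is right-angled, $L$ is a flag complex; because $W$ is of type $HM^n$, the Davis complex is an $n$-manifold and so $L$ is a generalized homology $(n-1)$-sphere. A maximal Euclidean special subgroup of codimension $1$ in a right-angled Coxeter group must be isomorphic to $(D_\infty)^{n-1}$, and inside $L$ it is recorded by a full ``peripheral octahedron'' $O_\alpha$: the flag $(n-2)$-sphere realized as the join of $n-1$ pairs of non-adjacent vertices. By Caprace's criterion these octahedra are precisely what obstructs word-hyperbolicity of $W$.

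Next, I would ``fill in'' each peripheral octahedron by adjoining to $S$ a new generator $v_\alpha$ that commutes with every vertex of $O_\alpha$, thereby reversing the truncation construction outlined in the introduction. This yields a new right-angled Coxeter system $(W', S')$ with nerve $L'$, for which three properties need to be verified: (i) $L'$ is still a flag generalized homology $(n-1)$-sphere, so $W'$ remains of type $HM^n$; (ii) $L'$ no longer contains any empty join, so by Moussong's criterion $W'$ is word-hyperbolic; and (iii) $W'$ is realized as the reflection group of a convex right-angled polytope $P' \subset \mathbb{H}^n$ of finite volume, in which the new vertices $v_\alpha$ correspond to horospherical truncations at the ideal vertices of the un-truncated picture. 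Once (i)--(iii) are established, the Potyagailo--Vinberg bound applied to $W'$ gives $n \leq 14$, and since the construction preserves the ambient manifold dimension the same bound holds for $W$.

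The main obstacle will be step (iii), the genuine hyperbolic realization. Items (i) and (ii) should reduce to link computations in flag complexes combined with the Moussong criterion, but producing an actual hyperbolic polytope $P'$ requires invoking an Andreev/Vinberg-type recognition theorem for right-angled polyhedra in $\mathbb{H}^n$ and, crucially, showing that the relatively hyperbolic structure on $W$ in Caprace's sense corresponds exactly to a finite set of ideal vertices of $P'$. Setting up this dictionary between relatively hyperbolic Coxeter groups of type $HM$ with peripheral Euclidean subgroups of codimension $1$ on the one hand, and finite-covolume right-angled hyperbolic Coxeter groups on the other, is the conceptual heart of the proof.
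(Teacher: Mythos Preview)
Your overall strategy --- reverse the truncation, obtain a word-hyperbolic right-angled Coxeter group $W'$ of the same $HM$-dimension, then invoke the Potyagailo--Vinberg bound --- founders precisely at the step you yourself flag as the hardest, namely (iii). There is no Andreev/Vinberg recognition theorem in dimensions $n>3$ that takes an abstract right-angled Coxeter system of type $HM^n$ and produces a genuine convex polytope in $\mathbb{H}^n$. Worse, the paper's own final section records that such a realization theorem is \emph{false}: Greene's examples \cite{Greene} give word-hyperbolic right-angled Coxeter groups of type $HM^4$ whose fundamental domain cannot be embedded in $\mathbb{H}^4$, and the paper sketches how to manufacture relatively hyperbolic analogues. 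So the ``dictionary'' you describe as the conceptual heart of the proof does not exist, and the reduction to \cite{PV} as a black box cannot be carried out. (Steps (i) and (ii) are also not automatic --- flagness of $L'$ and the absence of residual empty squares both require the isolated-flats hypothesis in an essential way --- but these are secondary to the fatal problem with (iii).)

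What the paper does instead is to \emph{rerun} the Potyagailo--Vinberg argument rather than reduce to its conclusion. The key observation is that the Nikulin--Khovanskii inequalities (Theorems~\ref{theorem:Nikulin} and~\ref{theorem:Khovanskii}) are purely combinatorial statements about face numbers of simple or edge-simple polytopes, and therefore apply equally well to the generalized simple polytope $P$ that is the fundamental chamber of a Coxeter group of type $HM^n$. After the cutting process of Section~\ref{sec:technical} one may assume every maximal octahedron lies in the boundary of $L$; collapsing the corresponding codimension-$1$ cubes in $P$ to points yields an edge-simple generalized polytope $P'$ with ``cusps''. One then carries out the face-by-face counting of \cite{PV} (dimensions $2$ through $5$), using the flag property of $L$ and the isolated-flats condition in place of hyperbolic geometry to establish the requisite incidence proposition, and concludes $n\le 14$ from Nikulin--Khovanskii exactly as before. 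No hyperbolic realization is ever needed.
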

A similar result is also hold for general Coxeter groups of type $HM$.
\begin{theorem}\label{theorem:general_case}
Let $W$ be a relatively hyperbolic Coxeter group of type HM with flats of
codimension 1. Then the dimension of $W$ is less than 996.
\end{theorem}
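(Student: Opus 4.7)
The plan is to realize any such $W$ as a finite-covolume reflection group on hyperbolic $n$-space and then invoke Prokhorov's theorem \cite{Prokhorov} that no such group exists when $n \geq 996$.

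First I would analyze the nerve $L$ of $(W,S)$. Since $W$ is of type $HM^n$, $L$ is a generalized homology $(n-1)$-sphere whose simplices are indexed by the spherical special subgroups. The hypothesis that every maximal Euclidean special subgroup $W_T$ has codimension $1$ translates into a structural statement about certain ``missing top-dimensional simplices'' of $L$: for each such $T$, every proper subset of $T$ is spherical (so does appear in $L$) but $T$ itself does not, and the size of $T$ is controlled by the classification of irreducible affine Coxeter diagrams so that $W_T$ acts on an $(n-1)$-dimensional Euclidean flat. By Caprace's criterion \cite{Caprace}, these $W_T$ are precisely the peripheral subgroups of the relatively hyperbolic structure on $W$.

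Next I would construct a geometric realization of $W$ as a Coxeter polytope $P \subset \mathbb{H}^n$ of finite volume. Ordinary (finite) vertices of $P$ should correspond to maximal spherical subsets of $S$ of size $n$, while ideal vertices of $P$ should correspond to the maximal Euclidean special subgroups detected above. Applying Vinberg's theory of hyperbolic Coxeter polytopes together with the classification of affine Coxeter diagrams, one verifies that the Gram matrix of $S$ has signature $(n,1)$ and that each ideal vertex of $P$ has an affine link of the appropriate rank. The resulting action of $W$ on $\mathbb{H}^n$ then has the Coxeter polytope $P$ as fundamental domain, and relative hyperbolicity relative to the Euclidean cusp stabilizers guarantees that its volume is finite.

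Finally, invoking \cite{Prokhorov} one concludes $n \leq 995$, so $n < 996$ as asserted. The hard part will be the realization step: establishing that the purely combinatorial $HM^n$ data, together with Caprace's description of the peripheral subgroups, is actually realized by a finite-volume Coxeter polytope in $\mathbb{H}^n$. This requires matching each facet, each ordinary vertex, and each ideal vertex of $P$ with the corresponding Coxeter subsystem, verifying Vinberg's signature condition directly from the nerve, and ruling out extra affine or non-spherical parabolic subdiagrams that would either obstruct finite covolume or violate the codimension-$1$ hypothesis. Once the realization is in place, the dimension bound is an immediate consequence of the classical theorem.
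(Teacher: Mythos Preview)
Your proposed reduction has a genuine gap at the realization step, and that gap cannot be filled in general. A Coxeter group of type $HM^n$ is defined by the purely topological condition that its nerve $L$ be a generalized homology $(n-1)$-sphere; nothing in this hypothesis, nor in Caprace's relative-hyperbolicity criterion, forces the Tits form to have signature $(n,1)$ or forces $W$ to act by reflections on $\mathbb{H}^n$ with finite-volume fundamental polytope. Indeed the paper exhibits (via Greene's construction and the Przytycki--\'{S}wi\k{a}tkowski subdivision) a relatively hyperbolic right-angled Coxeter group of type $HM$ with a codimension-$1$ flat whose fundamental domain does \emph{not} embed in hyperbolic space. So the class of groups covered by the theorem is strictly larger than the class to which Prokhorov's result applies directly, and your black-box invocation of \cite{Prokhorov} is not available.

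What the paper actually does is to transplant Prokhorov's \emph{argument} rather than his \emph{conclusion}. The fundamental domain $P$ is a generalized simple polytope; after the cutting process one may assume every Euclidean sub-nerve lies on the boundary of $L$, and collapsing the corresponding codimension-$1$ faces yields an almost simple (hence edge-simple) generalized polytope with finite and infinite vertices. The key observation is that the Nikulin--Khovanskii inequalities and the counting lemmas underlying Prokhorov's proof use only the combinatorics of an almost simple polytope, not any embedding in $\mathbb{H}^n$. One then assigns weights to dihedral angles of $3$-faces via distances in the Coxeter diagram, checks (using the classification of quasi-Lann\'er diagrams) that the two hypotheses of Prokhorov's combinatorial Lemma hold with $C=29/3$, and reads off $n<96\cdot 29/3+68=996$. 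You should abandon the hyperbolic realization and instead verify that each step of Prokhorov's weight argument goes through for the generalized polytope $P$.
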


Notice that a similar result of nonexistence of Gromov Hyperbolic Coxeter groups of type $HM$ is proven by Januszkiewicz and Swiatkowski in \cite{JJ}, which is an extension of a result of E. Vinberg. 

The paper is structured as follows. Section \ref{sec:basic} contains basic definitions and notations. We prove Theorem \ref{theorem:rightangled_case} in section \ref{sec:rightangled_case} and Theorem \ref{theorem:general_case} in section \ref{sec:general_case}. Section \ref{sec:technical} contains some technical points needed for the proofs and the last section contains some examples of relatively hyperbolic Coxeter groups.
\section*{Acknowledgements}
I would like to thank my advisor, Mike Davis, for insightful discussions.

\section{Basic definitions and notations}\label{sec:basic}
\subsection{Coxeter groups}
Recall that a Coxeter group is a group $W$ that has a presentation of
the form:
$$W = \langle s_i \in S, i \in I | (s_is_j)^{m_{ij}} = 1 \rangle $$
where $S$ and $I$ are two sets, $m_{ii}$ = 1 and $m_{ij} \in \{2,3,...,\infty\}$ for $i \ne j$ ($m_{ij} = \infty$ means there is no relation between $s_i$ and $s_j$). $(W,S)$ is called a \emph{Coxeter system}.\\
Given a Coxeter system $(W,S)$ the Caley 2-complex of $W$ can be completed to a
complex $\Sigma$, called the \emph{Davis complex} (see \cite{Davis}). The link $L$
of each vertex in $\Sigma$ is a simplical complex which has exactly $|S|$
vertices, labeled by elements of $S$ and any $k$ vertices $s_1, s_2,....,s_k$
spans a $(k-1)-$simplex if and only if the subgroup generated by $s_1, s_2,....,s_k$ of
$W$ is finite. The matrix $M = (m_{ij})$ is called \emph{Coxeter matrix}. There is a
method, due to Coxeter, of encoding the information in a Coxeter matrix $M$ into
a graph $\Gamma$ with edges labeled by integers greater than 3 or the symbol
$\infty$. This graph is called \emph{Coxeter graph}. The vertex set of $\Gamma$
is $I$. Two vertices $i$ and $j$ are connected by an edge if and only if $m_{ij}
\ge 3$. The edge $\{i,j\}$ is labeled by $m_{ij}$ if $m_{ij} \ge 4$. The graph
$\Gamma$ together with the labeling of its edges is called the {\em Coxeter diagram}
associated to $M$. The diagram is called \emph{positive definite (elliptic),
semidefinite (parabolic),} or \emph{indefinite} if the corresponding matrix $M$
has this property.

The distance $d(u,v)$ between vertices $u$ and $v$ of a diagram $S$ is defined
to be the length (the number of edges) of the shortest path joining $u$ and $v$.
If $u$ and $v$ are not joined by any path, then $d(u,v) = \infty$.

Recall that an indefinite diagram whose proper sub-diagrams are all either
elliptic or parabolic is called a \emph{quasi-Lanner diagram}. The list of all
quasi-Lanner diagrams can be found in \cite{Bourbaki}. It is important for the proof of
Theorem \ref{theorem:general_case} that the distance of any two vertices in a
quasi-Lanner diagram does not exceed 8, and there is at most one pair of
vertices which has distance equal to 8.
\subsection{Coxeter groups of type HM}
Another way to characterize Coxeter group of type $HM$ is via the link $L$ of $W$ or via the quotient of $\Sigma$ by $W$. First, recall that
a space $X$ is a {\em generalized homology n-sphere} if it is a homology n-manifold with the same homology as $S^n$. A pair
$(X,\partial X)$ is a {\em generalized homology $n$-disk} if it is a homology n-manifold with boundary and if it has the same
homology as $(D^n, S^{n-1})$.

When the Coxeter group is type $HM^n$, the link $L(S)$ is a generalized homology $(n-1)$-sphere and the fundamental domain $P$ is a so-called
{\em generalized simple polytope} (or a generalized polytope for short), i.e.
$(P,\partial P)$ is a generalized homology $n$-disk and for each $\sigma_T \in
L$, $(P_T,\partial P_T)$ is a generalized homology disk of dimension $n - |T|$, where $T$ is a subset of the set of generators of $W$ such that the subgroup $W_T$ generated by elements in $T$ is a finite group and $P_T$ is the fundamental domain for $W_T$ (for proof see \cite{Davis}, chapter 10). In this paper, we will use these characteristics of Coxeter group of type $HM$ to prove our theorems. 

\subsection{Some inequalities}
A convex bounded $n$-dimensional polyhedron $P \subset \mathbb{R}^n$ is called
\emph{simple} (respectively, \emph{almost simple}) if the links of its vertices
are simplices (respectively, direct products of simplices). A polyhedron $P$ is
called \emph{edge-simple} if the links of its vertices are simple polyhedrons (or equivalently, the links of its edges are simplices). Note that an almost simple polyhedron is edge-simple and if $P$ is one of the
above three types then its faces are polyhedra of the same type as $P$. Denote
$\alpha_0, \alpha_1,\ldots, \alpha_{n-1}$ the numbers of faces of dimensions $0,
1, \ldots, n-1$ of $P$, respectively, and
$$\alpha_k^{(i)}  = \frac{1}{\alpha_k} \sum_{\substack{\Gamma \subset P \\dim \Gamma = k}}\alpha_i^{\Gamma}$$
denotes the average number of $i$-faces of a $k$-face of $P$ (here
$\alpha_i^{\Gamma}$ is the number of $i$-faces of a face $\Gamma$). The
following theorem is due to Nikulin \cite{Nikulin}.
\begin{theorem} \label{theorem:Nikulin}
For every simple convex bounded polyhedron $P \subset \mathbb{R}^n$ for $i <k
\le [n/2]$ the following estimate holds:
$$\alpha_k^{(i)} < \binom{n-k}{n-i} \frac{\binom{i}{[n/2]} + \binom{i}{[(n+1)/2]}}{\binom{k}{[n/2]} + \binom{k}{[(n+1)/2]}}$$
\end{theorem}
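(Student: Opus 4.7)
The plan is to split the inequality into a geometric input (the simplicity of $P$) and a purely combinatorial estimate, and then to use the $h$-vector machinery of a simple polytope. The first step is a flag count: because $P$ is simple, the link of every $i$-face of $\partial P$ is an $(n-i-1)$-simplex, hence contains exactly $\binom{n-i}{k-i}$ sub-simplices of dimension $k-i-1$, and so each $i$-face of $P$ lies in precisely $\binom{n-i}{k-i}$ faces of dimension $k$. Double-counting incident pairs $(F,G)$ with $\dim F=i$, $\dim G=k$, $F\subset G$ yields the identity
$$\alpha_k\,\alpha_k^{(i)}=\binom{n-i}{k-i}\,\alpha_i,$$
so the theorem reduces to an estimate on the ratio $\alpha_i/\alpha_k$.

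Next, introduce the $h$-vector $(h_0,\ldots,h_n)$ of the simplicial polytope dual to $P$. The standard inversion of the $f$-$h$ transform yields
$$\alpha_i=\sum_{j=0}^{n-i}\binom{n-j}{i}\,h_j,$$
and two classical facts apply: the Dehn-Sommerville relations give $h_j=h_{n-j}$, while the Generalized Lower Bound Theorem gives non-negativity together with the unimodal inequalities $h_0\le h_1\le\cdots\le h_{[n/2]}$. Pairing $j$ with $n-j$ in the expansion produces the symmetrised form
$$\alpha_i=\sum_{j=0}^{[n/2]}\psi_i(j)\,h_j,\qquad \psi_i(j)=\binom{n-j}{i}+\binom{j}{i},$$
with the middle coefficient rescaled when $n$ is even so that the $j=n/2$ term is not counted twice. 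Both $\alpha_i$ and $\alpha_k$ are now non-negative linear combinations of the same $h_j$'s.

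The remaining task is to prove the pointwise inequality
$$\frac{\psi_i(j)}{\psi_k(j)}\le\frac{\binom{[n/2]}{i}+\binom{[(n+1)/2]}{i}}{\binom{[n/2]}{k}+\binom{[(n+1)/2]}{k}}\qquad\text{for every }0\le j\le[n/2],$$
whose right-hand side is exactly $\psi_i([n/2])/\psi_k([n/2])$. Once this is established, $\alpha_i/\alpha_k$ is bounded by the same quantity, since it is a weighted average of the ratios $\psi_i(j)/\psi_k(j)$ with non-negative weights $\psi_k(j)\,h_j$. The pointwise bound itself is verified by induction in $j$, comparing successive ratios with the help of Pascal's identity and of the inequality $n-j-1>j$ valid for $j<[n/2]$; strictness in the final estimate comes from the fact that $h_0=1>0$ for an actual polytope, so the extremal $h$-vector concentrated at the middle index is never literally realised.

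The main obstacle is exactly this monotonicity of the symmetrised binomial ratio $\psi_i/\psi_k$ along $\{0,1,\ldots,[n/2]\}$: the flag count and the $h$-vector algebra are essentially bookkeeping, while the combinatorial inequality in the last step is where the hypotheses $i<k\le[n/2]$ both become essential and where a careful case analysis on the parities of $n$ and of $k-i$ seems unavoidable.
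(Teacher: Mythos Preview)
The paper does not prove this statement; it is quoted from Nikulin and invoked only through Khovanskii's extension, with $(i,k)=(4,5)$. There is therefore no proof in the paper to compare your proposal against. Your outline is, in fact, a faithful reconstruction of Nikulin's own argument: the flag-count identity $\alpha_k\,\alpha_k^{(i)}=\binom{n-i}{k-i}\alpha_i$, the expression of each $\alpha_i$ through the $h$-vector of the dual simplicial polytope, the Dehn--Sommerville symmetry $h_j=h_{n-j}$, and the reduction to the monotonicity of the symmetrised binomial ratio $\psi_i(j)/\psi_k(j)$ on $0\le j\le[n/2]$ are exactly the steps in the original.

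Two small corrections to your write-up. First, you do not need the Generalized Lower Bound Theorem: the weighted-average step uses only $h_j\ge 0$, and for an actual simple polytope this is elementary (choose a generic linear functional and let $h_j$ count vertices at which exactly $j$ incident edges are decreasing). Citing the $g$-theorem here is harmless but heavier than required and historically out of order relative to Nikulin's 1981 paper. Second, your justification of the \emph{strict} inequality is incomplete. When $i=0$ and $k=1$ one has $\psi_0(j)=2$ and $\psi_1(j)=n$ for every $j$, so the ratio is constant and the bound is attained with equality no matter what the $h$-vector is; the strict inequality as stated already fails in that degenerate case. In every other admissible case $\psi_i/\psi_k$ is \emph{strictly} increasing on $\{0,\dots,[n/2]\}$, and strictness then comes from $h_0=1>0$ combined with that strict monotonicity, not merely from the $h$-vector failing to be concentrated at the middle index. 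For the use made later in the paper only $(i,k)=(4,5)$ is needed, and there your argument goes through without issue.
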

Khovanskii extends this result to edge-simple polyhedron \cite{Khovanskii}.
\begin{theorem} \label{theorem:Khovanskii}
The estimate from theorem \ref{theorem:Nikulin} holds for an edge-simple
polyhedron $P \subset \mathbb{R}^n$.
\end{theorem}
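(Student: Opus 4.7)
The goal is to extend Nikulin's estimate (Theorem \ref{theorem:Nikulin}) from the simple to the edge-simple setting. My first attempt would be to trace through Nikulin's original proof and check whether the only way the simple hypothesis is used is through the structure of edge links. An edge-simple polytope has edge links that are $(n-2)$-simplices, exactly as in the simple case, so any argument that only counts incidences of $k$-faces and $i$-faces through a common edge should carry over verbatim. If Nikulin's averaging inequality can be localized in this way, the result follows immediately.

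If that direct route fails, the fallback is a truncation argument. For an edge-simple polytope $P \subset \mathbb{R}^n$, cut off a small neighborhood of each vertex $v$ by a generic hyperplane $H_v$; the new facet $F_v$ is combinatorially equivalent to the vertex link of $v$, which is a simple $(n-1)$-polytope. A further generic perturbation of these truncating hyperplanes then produces a simple polytope $P'$. One would next derive explicit relations expressing the face numbers $\alpha_i(P')$ and the averaged incidences $\alpha_k^{(i)}(P')$ in terms of $\alpha_i(P)$, $\alpha_k^{(i)}(P)$, and the face numbers of the vertex links of $P$. Applying Theorem \ref{theorem:Nikulin} to $P'$ and substituting these relations should yield the desired bound for $P$.

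The main obstacle in the second approach is the error-term analysis: truncation creates many new low-dimensional faces near each old vertex, and one must show they do not spoil the sharp strict inequality. I would proceed by induction on $n$, using the edge-simple estimate in dimension $n-1$ to bound the face counts of vertex links (since vertex links of an edge-simple polytope are themselves simple, hence edge-simple, $(n-1)$-polytopes). Feeding these bounds into the transfer formulas should close the induction. If the bookkeeping proves unwieldy, an alternative is a limiting argument: a family of simple polytopes $P'_t$ approaching $P$ as $t \to 0$, with Nikulin's inequality valid on each $P'_t$ and specializing to the desired estimate for $P$ in the limit.
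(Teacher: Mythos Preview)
The paper does not supply a proof of this theorem. It is quoted as a result of Khovanskii \cite{Khovanskii} and used as a black box; the surrounding text reads simply ``Khovanskii extends this result to edge-simple polyhedron \cite{Khovanskii}'' followed by the statement. So there is no proof in the paper to compare your proposal against.

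As a side remark on your sketch itself: the first approach you describe --- checking that Nikulin's double-counting and averaging argument only uses that edge links are simplices --- is indeed the spirit of Khovanskii's original treatment, and is the right instinct. Your fallback truncation-and-perturbation route is plausible in outline but, as you anticipate, the error-term bookkeeping is genuinely delicate: truncation at a non-simple vertex replaces one vertex by many new faces of all intermediate dimensions, and the transfer formulas relating $\alpha_k^{(i)}(P')$ to $\alpha_k^{(i)}(P)$ are not clean. The limiting argument you mention at the end would at best yield a non-strict inequality, so it would not recover the strict ``$<$'' in Nikulin's bound without additional work. If you were actually writing this up, the first route is the one to pursue; but for the purposes of this paper the result is simply cited.
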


We will use these inequalities for our fundamental domain $P$ of the Coxeter group $W$ of type $HM$. Here $P$ is generally not a polyhedron in $\mathbb{R}^n$, but the links of its vertices are simplices (faces in $L$) and we can still apply these inequalities to $P$ since the proofs of theorems \ref{theorem:Nikulin} and \ref{theorem:Khovanskii} use only combinatorial properties of a polyhedron. 

\section{The cutting process} \label{sec:technical}
In Theorem \ref{theorem:rightangled_case} and Theorem \ref{theorem:general_case} we are considering Coxeter group of
type $HM$ with maximal flats (maximal Euclidean subgroups) of codimension 1. Type $HM$ means that the nerve
$L$ is a generalized homology sphere of dimension $n-1$ and the fundamental
domain $P$ is a generalized simple polytope (of dimension $n$). The condition that all maximal
flats are of codimension 1 guarantees that the nerves of those flats are of
correct dimension (i.e, of dimension $n-2$) and they are also of type $HM$. It is easy to see that for any nerve $L_1$ of an Euclidean subgroup of $W$ of codimension 1, if we cut $L$ along $L_1$, we get two connected components (Alexander duality). We say that a nerve $L_1$ of an Euclidean subgroup of $W$ (of codimension 1) lies in the boundary of $L$ if at least one of those connected components is a cone on $L_1$. To prove Theorem  \ref{theorem:rightangled_case} and Theorem \ref{theorem:general_case} it is crucial that the nerves of all maximal Euclidean subgroups lie in the boundary of $L$.  This can be assumed since if $L_1$ is not in the boundary of $L$ we can cut $L$ along $L_1$ and cone off $L_1$ in each connected  component. Each new piece we get from this cutting process is again a generalized homology sphere (a simple Mayer-Vietoris sequence argument) and has $L_1$ in the boundary.

The opposite process of cutting is gluing. If two nerves $L_1$ and $L_2$ of relatively hyperbolic Coxeter groups of type $HM^n$ with isolated flats of codimension 1 have the same flat, then we can glue $L_1$ and $L_2$ along the flat and get a new nerve of the same type. 

\section{The right-angled case.} \label{sec:rightangled_case}
In this section we prove Theorem \ref{theorem:rightangled_case}. Let $W$ be a
right-angled Coxeter group of type $HM$ with maximal Euclidean subgroups of
codimension 1. Applying the cutting process to the nerve $L$ of $W$, we can assume that the nerves of Euclidean subgroups of codimension 1 of $W$ lie in the boundary of $L$. We follow the proof of the main theorem in Potyagailo-Vinberg's paper \cite{PV}. In
this paper the authors prove the boundedness for right-angled reflection groups
in hyperbolic spaces of finite volume, i.e., the fundamental domain of the
action of a right-angled reflection group on hyperbolic space, which is a polyhedron, has finite volume. Our case is similar to this one
in the sense that the cubes of codimension 1 (these are the fundamental domain
for Euclidean subgroups) can be thought as cusps for the polyhedron. To use their
proof we collapse each cube in the (generalized) polytope $P$ to a point to get another polytope
$P'$. The condition that cubes are isolated in this situation guarantees that
$P'$ is actually a generalized polytope.

We call a vertex in $P'$ a \emph{cusp} if it is the vertex gotten from
collapsing a cube. Two codimension 1 faces of $P'$ are \emph{parallel} if they
meet at a cusp but do not otherwise intersect. We use $F'$ to denote a face of $P'$ gotten from face $F$ of $P$
by collapsing cubes in $P$. Observe that
\begin{enumerate}
\item $P'$ is not a simple generalized polytope as $P$, but it is an edge-simple generalized polytope because the links of edges in $P'$ are the same as the links of the same edges in $P$. And we can still apply the Nikulin-Khovanskii inequality for $P'$.
\item Since $P$ is simple, any face of $P$ is also simple. And if a face $F$ has
non-empty intersection with a cube of codimension 1 of $P$ then the intersection is a
a cube of codimension 1 of $F$ (so the above definition of cusp is still valid
for faces of $P'$). 
\end{enumerate}

\begin{prop}
Let $F_1', F_2',...$ be faces of codimension 1 of $P'$. Then
\begin{enumerate}
\item \label{first} If $F_1', F_2', F_3'$ are pairwise mutually adjacent, then they meet at a
$(n-3)$-dimensional face;
\item \label{second} if $F_1'$ and $F_2'$ are parallel and $F_3'$ is adjacent to them ($F_1'$ and $F_2'$  are not adjacent), then $F_1', F_2',
F_3'$ meet at a cusp;
\item \label{third} if $F_1'$ and $F_2'$ are parallel and $F_3'$ and $F_4'$ are adjacent to them, then $F_1', F_2',
F_3', F_4'$ meet at a cusp.
\end{enumerate}
\end{prop}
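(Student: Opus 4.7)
My plan is to lift everything to $P$ and translate the statements into Coxeter-theoretic conditions. Label each codimension-$1$ face of $P$ by a generator of $W$; a cube face $C$ corresponds to a generator $s_C$ introduced in the cutting process, commuting precisely with the generator set $T$ of the associated maximal Euclidean subgroup $W_T \cong D_\infty^{n-1}$. In the right-angled case, two non-cube codimension-$1$ faces $F_i, F_j$ meet in a codimension-$2$ face of $P$ iff $s_i, s_j$ commute, and $T$ is partitioned into $n-1$ infinite-dihedral pairs indexing the $n-1$ parallel pairs of facets of $C$. Adjacency of $F_i', F_j'$ in $P'$ then lifts to a codimension-$2$ intersection in $P$ that is not contained in any cube, and parallelism of $F_i', F_j'$ at the cusp $c^*$ arising from $C$ lifts to $s_i, s_j$ being the two members of an infinite-dihedral pair inside $T$.

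Part (1) is immediate from this translation. Pairwise adjacency gives pairwise commutation of $s_1, s_2, s_3$ in the right-angled group, so $\{s_1, s_2, s_3\}$ generates $(\mathbb{Z}/2)^3$ and is a spherical subset of $S$; hence $F_1 \cap F_2 \cap F_3 = P_{\{s_1, s_2, s_3\}}$ is a codimension-$3$ face of $P$. Since $P$ is simple, any codimension-$3$ face lies in exactly three codimension-$1$ faces, so this face lies in no cube, is not collapsed, and provides the $(n-3)$-dimensional face of $P'$ at which $F_1', F_2', F_3'$ meet.

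Parts (2) and (3) both reduce to the following key claim: if $\{s_1, s_2\}$ is an infinite-dihedral pair inside some maximal Euclidean $T$ and $s_k$ commutes with both $s_1, s_2$, then $\{s_1, s_2, s_k\}$ is already contained in some (possibly different) maximal Euclidean $T'$, whose associated cube $C'$ therefore meets each of $F_1, F_2, F_k$. Granted the claim, applying it with $k = 3$ yields (2), and applying it with $k = 3$ and $k = 4$ followed by a coincidence argument (using the HM/homology-sphere structure and the maximality of $T'$) yields (3). To produce $T'$ I would find a partner $s_k^{\mathrm{opp}}$ with $m(s_k, s_k^{\mathrm{opp}}) = \infty$ and $s_k^{\mathrm{opp}}$ commuting with $s_1, s_2$ --- a natural candidate sits in $T \setminus \{s_1, s_2\}$, whose generators all commute with $\{s_1, s_2\}$ by construction --- and then extend $\langle s_1, s_2, s_k, s_k^{\mathrm{opp}} \rangle \cong D_\infty^2$ to a full max Euclidean of rank $2(n-1)$. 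The main obstacle is proving that this extension is always possible and that the resulting cube is uniquely determined by the data: I expect this to rely essentially on the HM/homology-sphere structure of $L$ and the links at cusps (combinatorial cross-polytopes of dimension $n-2$) together with the Alexander-duality/separation arguments used in Section~\ref{sec:technical}.
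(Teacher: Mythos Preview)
Your treatment of part~(1) is correct and is essentially the paper's argument: the flag property of $L$ turns pairwise adjacency into a $2$-simplex in the nerve, hence a codimension-$3$ face of $P$ that survives in $P'$.

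For parts~(2) and~(3) there is a genuine gap. You reduce both to a ``key claim'' which you then explicitly leave unproved, flagging the extension step as ``the main obstacle''; and your proposed candidate for $s_k^{\mathrm{opp}}$ in $T\setminus\{s_1,s_2\}$ need not have order $\infty$ with $s_k$ at all (every such generator could perfectly well commute with $s_k$), so the construction is not even guaranteed to start. The paper's route is shorter and uses the very cone-point generator $s_C$ you already introduced, rather than trying to manufacture a new maximal Euclidean. For part~(2), let $v_4$ be the vertex of $L$ corresponding to $s_C$: then $v_1,v_4,v_2,v_3$ is a $4$-cycle with no diagonal $v_1v_2$. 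Either $v_3v_4$ is an edge---so $s_3$ commutes with $s_C$, which by your own setup forces $s_3\in T$ and hence $F_3$ meets the cube---or the square is empty, and the isolated-flats criterion (Caprace) places this empty square inside a single cross-polytope, again putting all three faces at one cube. For part~(3), apply part~(2) with $k=3$ and $k=4$ to obtain two cusps $u_1,u_2$; if they are distinct, their cross-polytope nerves both contain the non-adjacent pair $v_1,v_2$, i.e.\ share a $D_\infty$, so by Caprace they coincide, and the two distinct cone points over the same cross-polytope exhibit $L$ as a suspension---contradicting the hypothesis that the maximal flats have codimension~$1$. The missing idea is thus not Alexander duality or an extension procedure: the cone-point vertex already supplies the fourth corner of the relevant square, and Caprace's criterion (once for containment in a flat, once for uniqueness of that flat) finishes both parts.
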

\begin{proof}
Part \ref{first} follows from the property of the nerve $L$ that $L$ is a {\em flag
complex} (recall that a flag complex is a simplicial complex in which any finite collection of vertices is a simplex if there are edges connecting any two vertices from the collection, and since $W$ is right-angled Coxeter group the nerve $L$ is a flag complex). If $F_1', F_2', F_3'$ are pairwise mutually adjacent, then the same is true for $F_1, F_2, F_3$. Then the vertices $v_1, v_2, v_3$ in the nerve $L$ corresponding to $F_1, F_2, F_3$ are connected by edges. Since $L$ is a flag complex, $v_1, v_2, v_3$ form a triangle in $L$. This means $F_1, F_2, F_3$ meet at a $(n-3)-$dimensional face.

Part \ref{second}: 4 vertices $v_1, v_2, v_3, v_4$ in the nerve $L$ corresponding to $F_1, F_2, F_3$ and the cusp form a square. If this square is an empty square, it must be part of an octahedron face in $L$ (dual of the cube face of $P$) because of the isolated cube condition of $P$. In this case all the faces $F_1, F_2, F_3$ have intersection with the same cube in $P$, so they meet at a cusp. If the square is not empty, there must be an edge connecting two opposite vertices.  And it's not in the case of part \ref{first} (no edge between $v_1$ and $v_2$) then there must be an edge connecting $v_3$ and $v_4$.

Part \ref{third}: Let $v_1, v_2, v_3, v_4$ are vertices in $L$ corresponding to $F_1, F_2, F_3, F_4$, and $u_1, u_2$ corresponding to the two cubes (cusps) which $F_1', F_2', F_3'$ and $F_1', F_2', F_3'$ meet following part \ref{second}. If $u_1$ and $u_2$ are the same, then nothing needs to prove. If not, then $u_1, u_2$ are cone points over the nerves $L_1, L_2$ of Euclidean subgroups of $W$. Now $L_1$ and $L_2$ have two common vertices $v_1, v_2$ which are not connected by an edge, they must be the same nerve of an Euclidean subgroup of $W$ by the criterion of isolated flat for relatively hyperbolic Coxter group (\cite{Caprace}). Then in this case $L$ is the suspension of $L_1$ and thus $W$ has dimension $n-1$ which is a contradiction.
\end{proof}

Now we will consider faces of dimensions 2, 3, 4 and 5 of $P'$. Denote $a_k =
a_k(P')$ the number of $k$-faces of $P'$. In particular, $a_0(P')$ is the total
number of ordinary vertices and cusps of $P'$. The number of cusps will be
denoted by $c$. For each case below, the number of faces is for the
corresponding face of $P'$.

Faces of dimension 2: Let $F'$ be any face of dimension 2 of $P'$. We will prove
that $a_1(F') + c(F') \ge 5$. Since $P$ is a right-angled polytope, the two
dimensional polyhedron $F$ of $P$ has at least 5 edges. If $F'$ does not contain
any cusps, then $F = F'$, and the inequality is true. If $F'$ has cusps, it
means some edges of $F$ are also edges of squares in cubes, those edges will be
collapsed to cusps in $F'$, those edges of $F$ are not edges in cubes will be
remained edges in $F'$, so the inequality is still true for this case.

The difference $a_1 + c - 5$ will be called the \emph{excess} of $ F'$ and
denoted by $e = ex(F')$.

Faces of dimension 3: For each 2-dimensional face $Q'$ of some 3-dimensional
face $F'$ of $P'$ we have
\begin{equation} \label{2face1}
a_1(Q') + c(Q') = 5 + ex(Q')
\end{equation}
Summing over all $Q'$ and taking into account that each edge of $F'$ belongs to
2 faces and each cusp belongs to 4 faces, we get
\begin{equation} \label{2face2}
2a_1 + 4c = 5a_2 + \sum_{Q'}ex(Q')
\end{equation}
On the other hand, eliminating $a_0$ from the Euler equation $a_0 - a_1 + a_2 =
2$ and the obvious equation $2a_1 = 3a_0 + c$ (counting the number of vertices
in edges in two ways, counting by edges, counting by vertices - since $F$ is
simple each original vertex belongs to 3 edges) gives
\begin{equation} \label{3face1}
a_1 + c = 3a_2 - 6
\end{equation}
Substituting this into \eqref{2face2}, we finally obtain
\begin{equation} \label{3face2}
a_2 + 2c = 12 + \sum_{Q'} ex(Q') \ge 12.
\end{equation}
Want to prove the inequality
\begin{equation} \label{3face3}
a_2 \ge 6
\end{equation}
Here the argument from \cite{PV} can be used because for the 3 dimension case
$P$ can be thought of hyperbolic polytope, this follows from Steinitz' theorem (Theorem 4.1 in \cite{Ziegler}).

It follows from \eqref{3face1} and \eqref{3face3} that
\begin{equation} \label{3face4}
a_2 + c \ge 9
\end{equation}

Faces of dimension 4: Let $Q'$ be some 3 dimensional face of 4-dimensional face $F'$ of
$P'$. There are $a_2(Q')$ 3-dimensional faces adjacent to $Q'$ and, for each
cusp of $Q'$, there is an extra 3-dimensional face having only this cusp in
common with $Q'$. Together with $Q'$, this gives at least $1 + a_2(Q') + c(Q')$
3-dimensional faces of $F'$. So \eqref{3face4} implies
\begin{equation} \label{4face1}
a_3 \ge 10.
\end{equation}
We need a more subtle inequality
\begin{equation} \label{4face2}
a_3 + c \ge 15.
\end{equation}
To prove it, take again any hyperface $Q'$ of $F'$. There are at least $1 +
a_2(Q') + c(Q')$ hyperfaces meeting $Q'$ and at least $c(Q')$ cusps, so $a_3 +
c\ge 1 + a_2(F') + 2c(F')$. If $a_2(F') + 2c(F') \ge 14$, then \eqref{4face2}
follows. By \eqref{3face2} we have $a_2(Q') + 2c(Q') \ge 12$. Consider two
cases.

Let $a_2(Q') + 2c(Q') = 13$. Then \eqref{3face2} implies that all but one
2-faces of $Q'$ have zero excess. Let $q'$ be a 2-face of $Q'$ with zero excess,
i.e. $a_1(q') + c(q') = 5$. Since $c(q') \le 2$, we have
\begin{equation} \label{4face3}
1 + a_1(q') + 2c(q') \le 8.
\end{equation}
Let $Q_1'$ be the hyperface of $P'$ adjacent to $Q'$ along $q'$. By
\eqref{3face4} we have $a_2(Q_1') + c(Q_1') \ge 9$. Comparing this with
\eqref{4face3}, we have
$$(a_2(Q_1') - 1 - a_1(q') - c(q')) + (c(Q_1') - c(q')) \ge 1$$
we see that $Q_1'$ must have either a 2-face $q_1'$ not intersecting $Q'$, or a
cusp beyond $Q'$ (2-face of $Q_1'$ if intersects $Q'$ then it has common edge
(so $a_1(q')$), 1 is contributed to $q'$, $c(q')$ is contributed to the number
of 2-faces of $Q_1'$ which are parallel to $q'$). In the first case the
hyperface adjacent to $Q_1'$ along $q_1'$ does not intersect $Q'$ by proposition
\ref{first}, \ref{second}. So in both case \eqref{4face2} holds.

Let $a_2(Q') + 2c(Q') = 12$. Then \eqref{3face2} implies that all 2-faces of
$Q'$ have zero excess. Let $q'$ be any of them. Then $q'$ is a triangle with two
cusps, or a quadrilateral with one cusp, or else a pentagon without cusps. If
$q'$ is not a triangle, then
\begin{equation} \label{4face4}
1 + a_1(q') + 2c(q') \le 7.
\end{equation}
Consider the hyperface $Q_1'$ of $P'$ adjacent to $Q'$ along $q'$. Then
\eqref{4face4} implies that $Q_1'$ has at least two 2-faces not intersecting
$Q'$ or cusps beyond $Q'$, whence again \eqref{4face2} follows.

Let finally all 2-faces of $Q'$ be triangles with two cusps. Take any parallel
2-faces $q_1'$ and $q_2'$ of $Q'$, and let $Q_1'$ and $Q_2'$ be the hyperfaces
of $P'$ adjacent to $Q'$ along $q_1'$ and $q_2'$ respectively. By the above each
of them must have either a 2-face not intersecting $Q'$ or a cusp beyond $Q$. If
these are two 2-faces, then the hyperfaces of $P'$ adjacent to $Q_1'$ and $Q_2'$
along them, cannot coincide by proposition \ref{third}. If these are two cusps,
then they cannot coincide as $Q_1'$ and $Q_2'$ are parallel at a cusp of $Q'$.
So in all the cases \eqref{4face2} holds.

Faces of dimension 5: Take any hyperface $Q'$ of 5-dimensional face $F'$ of $P'$. There
are $a_3(Q')$ hyperfaces adjacent to $Q'$ and, for each cusp of $Q'$, there is
an extra hyperface having only this cusp in common with $Q'$. Together with
$Q'$, this gives at least $1 + a_3(Q') + c(Q')$ hyperfaces. So \eqref{4face2}
implies
\begin{equation} \label{5face1}
a_4 \ge 16.
\end{equation}
Using the Nikulin-Khovanskii inequality (theorems \ref{theorem:Nikulin},
\ref{theorem:Khovanskii}) gives for the average number $a^4_5$ of 4-faces of a
5-face of $P'$

$$
a^4_5 < 
\begin{cases} 
\frac{10(n-4)}{n-8}, &\text{if $n$ is even,} \\
\frac{10(n-3)}{n-7}, & \text{if $n$ is odd.} 
\end{cases}
$$
On the other hand, it follows from \eqref{5face1} that $a^4_5 \ge 16$. In both
case this means that $n \le 14$.

\section{The general case.} \label{sec:general_case}
In the following sections we will prove Theorem~\ref{theorem:general_case} using
the argument in Prokhorov's paper \cite{Prokhorov}. Applying the cutting process in section 3, we can assume that the nerve $L$ of the Coxeter group $W$ has all nerves of (maximal) Euclidean subgroups of codimension one on its boundary.

Let recall the notations we use in this section. Let $W$ be a Coxeter group and $L$ is its nerve which has the type as
described above, $P'$ is the fundamental domain of $W$ action and it is the dual
of $L$.  $P$ is gotten from $P$ by collapsing all the $(n-1)$-faces which are corresponding to the cone points one the nerves of Euclidean subgroups of codimension 1 to points. All dihedral angles
of $P$ are of the form $\pi/k$, $k = 2, 3, ..., \infty$. There are two types of
vertices in $P$: {\em finite vertices} and {\em infinite vertices}. Finite vertices are actual
vertices in $P$ (and $P'$), infinite vertices are the ones which corresponding with those
Euclidean subgroups. The link of a finite
vertex is a simplex (this is the nerve of a finite subgroup and is stabilizer of
the vertex), the link of infinite vertex is a product of simplexes
(and it is the dual of the nerve of Euclidean subgroup, which is a joint of simplices). Since the links of vertices of $P$ are either simplices or products of simplices, $P$ is almost simple.

A 3 dimensional face $F$ of $P$ is called \emph{bad} if it has the type of a
triangular bi-pyramid (see Figure 1). In this polyhedron $F$, vertices $v_2,
v_3,$ and $v_4$ are at infinity because their links are squares which are
products of simplices. These squares are the nerves of Euclidean subgroups so
they are Euclidean squares. Thus, dihedral angles of $F$ at vertices $v_2, v_3,
v_4$ are equal to $\pi/2$. Then, the links of vertices $v_1$ and $v_5$ are
triangles with angles equal to $\pi/2$, this means they are spherical triangles
and so vertices $v_1$ and $v_5$ are finite vertices.

\begin{figure}
  \centering
     \def\svgwidth{\columnwidth}
    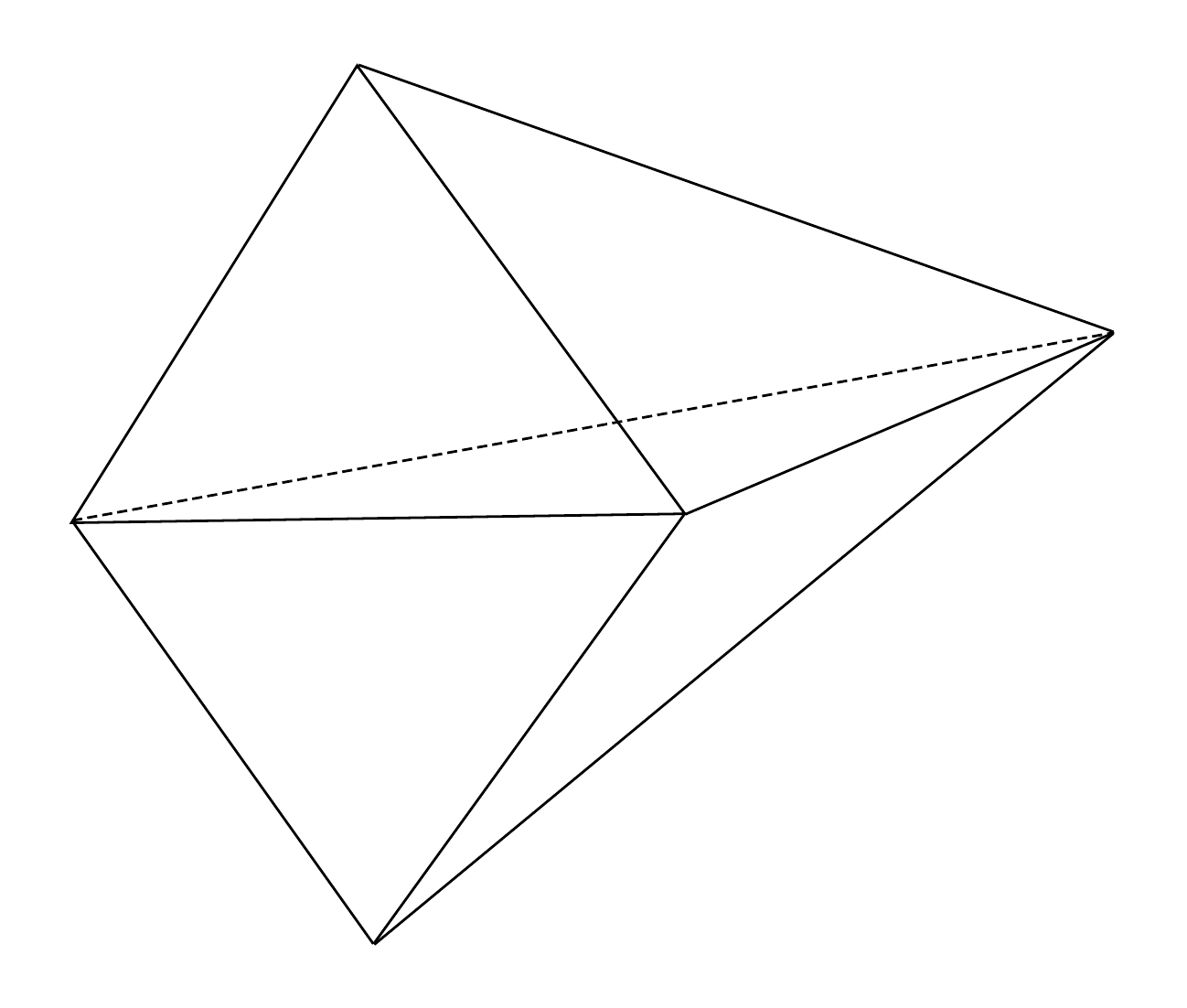
    \caption{ Bad 3-dimensional face}
\end{figure}

3-faces of $P$ different from the bad ones are called \emph{good}. The following
lemmas are from \cite{Prokhorov}. Although they are stated for polyhedron of
finite volume for a reflection group in hyperbolic space, their proof uses only
combinatorial properties of those polyhedra, which are satisfied in our case.

\begin{lemma} \label{lemma:badgood3face}
Suppose that $\alpha_3$ is the number of all 3-faces of $P$, $\alpha'_3$ is the
number of bad 3-faces, and $\alpha''_3 = \alpha_3 - \alpha'_3$ is the number of
good 3-faces of $P$. Then for $n \ge 8$
$$\frac{\alpha'_3}{\alpha_3} = p < \begin{cases} (3n+6)/4(n-3), &\text{$n$ even} \\ (3n+9)/4(n-2), &\text{$n$ odd} \end{cases} $$
\end{lemma}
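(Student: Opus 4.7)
The plan is to imitate Prokhorov's double-counting, adapted to our almost simple generalized polytope $P$. Since $P$ is almost simple (finite vertices have simplex links; infinite vertices have products of simplices as links), Khovanskii's extension of Nikulin's inequality (Theorem \ref{theorem:Khovanskii}) applies to $P$ and supplies an upper bound on $\alpha_3^{(i)}$, the average number of $i$-faces of a $3$-face, depending only on $n$.

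First I would record the rigid combinatorial signature of a bad $3$-face: the triangular bi-pyramid has exactly $5$ vertices ($2$ finite apices of degree $3$ and $3$ infinite equatorial vertices of degree $4$), $9$ edges, and $6$ triangular $2$-faces. By contrast, any good $3$-face, being an almost simple $3$-polyhedron, either has strictly more vertices, carries a non-triangular $2$-face, or distributes finite and infinite vertices differently. I would package this as a single linear functional $\phi$ on the $f$-vector of a $3$-face (for instance a suitable combination of $\alpha_0$, $\alpha_2$, and the count of finite vertices) that takes a fixed value $\phi_{\mathrm{bad}}$ on every bad $3$-face and satisfies $\phi(F)\ge \phi_{\mathrm{good}}>\phi_{\mathrm{bad}}$ on every good $3$-face.

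Next I would apply Theorem \ref{theorem:Khovanskii} with $k=3$ and the value of $i$ corresponding to $\phi$, obtaining an inequality of the form $\bar\phi:=\alpha_3^{-1}\sum_{F}\phi(F)<B(n)$, where the bound $B(n)$ comes directly from Nikulin's expression with the usual split between $n$ even and $n$ odd (this is where the parity in the statement arises, from $\binom{[n/2]}{\cdot}+\binom{[(n+1)/2]}{\cdot}$). Writing
\begin{equation*}
\alpha_3\,\bar\phi \;=\; \phi_{\mathrm{bad}}\,\alpha'_3 \;+\; \sum_{F\text{ good}}\phi(F) \;\ge\; \phi_{\mathrm{bad}}\,\alpha'_3+\phi_{\mathrm{good}}\,\alpha''_3
\end{equation*}
and substituting $\alpha''_3=\alpha_3-\alpha'_3$, one solves the resulting linear inequality for $p=\alpha'_3/\alpha_3$. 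Arithmetic simplification, using the explicit form of the Nikulin--Khovanskii bound in each parity, yields exactly the two fractions $(3n+6)/4(n-3)$ and $(3n+9)/4(n-2)$.

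The main obstacle, which is really the heart of the argument borrowed from \cite{Prokhorov}, is the combinatorial classification step that supplies a clean lower bound $\phi_{\mathrm{good}}$. One must rule out (or treat separately) good $3$-faces that come close to the bi-pyramid, such as square pyramids, triangular prisms, or tetrahedra with infinite vertices: for each such candidate the link conditions in $L$ and the isolation of the Euclidean subgroups must be used to show that $\phi$ exceeds $\phi_{\mathrm{bad}}$ by enough to close the gap with the Nikulin--Khovanskii upper bound. Once this case analysis is carried out, the rest is mechanical arithmetic.
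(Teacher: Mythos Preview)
Your double-counting scheme has the inequality pointing the wrong way. You posit a functional with $\phi(F)\ge\phi_{\mathrm{good}}>\phi_{\mathrm{bad}}$ on good faces and combine it with the Nikulin--Khovanskii \emph{upper} bound $\bar\phi<B(n)$. Substituting $\alpha''_3=\alpha_3-\alpha'_3$ into your displayed inequality gives
\[
\phi_{\mathrm{good}}-p\,(\phi_{\mathrm{good}}-\phi_{\mathrm{bad}})\ \le\ \bar\phi\ <\ B(n),
\]
which solves to a \emph{lower} bound $p>(\phi_{\mathrm{good}}-B(n))/(\phi_{\mathrm{good}}-\phi_{\mathrm{bad}})$, not the upper bound the lemma asserts. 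To extract an upper bound on $p$ from an upper bound on an average, the bad faces must contribute \emph{more} than the uniform lower bound on good faces, i.e.\ one needs $\phi_{\mathrm{bad}}>\phi_{\mathrm{good}}$.

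Even after reversing the roles, the ``package it as a single linear functional on the $f$-vector'' idea does not produce the stated fractions. With $\phi=\alpha_2$ one has $\phi_{\mathrm{bad}}=6$ (the bipyramid) and the minimum over good faces is $4$ (the tetrahedron is good), while for $n$ even Theorem~\ref{theorem:Khovanskii} gives $\alpha_3^{(2)}<6(n-2)/(n-4)$; this yields only $p<(n+2)/(n-4)\to 1$, far from $3(n+2)/4(n-3)\to 3/4$. Taking $\phi=\alpha_1$ or $\phi=\alpha_0$ is worse. Including the ``count of finite vertices'' does not help here either, because that count is not one of the quantities $\alpha_3^{(i)}$ bounded by Theorem~\ref{theorem:Khovanskii}, so you cannot feed it into the Nikulin--Khovanskii machinery as stated. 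The point is that the bipyramid is not extremal for any raw $f$-vector entry; Prokhorov's argument exploits the almost-simple vertex-figure structure (how infinite vertices sit in $3$-faces as $\Delta^1\times\Delta^1$ figures versus $\Delta^2$ figures) rather than a bare $f$-vector separation.

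For comparison, the paper itself does not reprove the lemma: it quotes it from \cite{Prokhorov} and only observes that Prokhorov's proof is purely combinatorial and therefore transfers to the present generalized-polytope setting. So the ``paper's own proof'' is a citation, and your task is really to reproduce Prokhorov's count, not to invent a new one; the sketch above does not yet do that.
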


The following definition is needed for the next lemma:
\begin{dfn}
A dihedral angle $(F,\{\Gamma_1, \Gamma_2\})$ of a 3 dimensional face $F$ of $P$
is defined to be a pair consisting of a 3-face $F$ and a set of two 2-faces
$\Gamma_1$ and $\Gamma_2$ of $F$ which intersect in an edge.
\end{dfn}

\begin{lemma} \label{lemma:inequality}
Let $P$ be an n-dimensional almost simple polyhedron, and let $C$ be a positive
number. Suppose that dihedral angles (edges) of 3-faces of $P$ can be assigned
nonnegative weighs so that:
\begin{enumerate}
\item \label{condition1} the sum $\sigma(\Gamma^1)$ of weights over 3-faces at every edge
$\Gamma^1$ does not exceed C(n-1), and
\item \label{condition2} the sum $\sigma(\Gamma^3)$ of weights over all edges of any good 3-face
$\Gamma^3$ is not less than $7 - k$, where k is the number of 2-faces of
$\Gamma^3$.
\end{enumerate}
Then $n < 96C + 68$.
\end{lemma}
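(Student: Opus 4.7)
The plan is to run a double counting of the weights on dihedral angles. Writing $w(\Gamma^3,\Gamma^1)\ge 0$ for the weight of the dihedral angle at edge $\Gamma^1$ of the 3-face $\Gamma^3$, I form the total
$$S := \sum_{\Gamma^1 \subset \Gamma^3} w(\Gamma^3,\Gamma^1).$$
Grouping by edges and applying hypothesis~\ref{condition1} gives the upper bound $S \le C(n-1)\alpha_1$. Grouping by 3-faces instead, discarding the nonnegative contributions from bad 3-faces and using hypothesis~\ref{condition2} on good ones, gives
$$S \ge \sum_{\Gamma^3\text{ good}}\bigl(7 - k(\Gamma^3)\bigr) = 7\alpha_3'' - \sum_{\Gamma^3\text{ good}} k(\Gamma^3).$$
Since every bad 3-face is a triangular bipyramid, which has exactly six 2-faces, one may rewrite $\sum_{\text{good}} k(\Gamma^3) = \alpha_3\alpha_3^{(2)} - 6\alpha_3'$, where $\alpha_3^{(2)}$ is the average number of 2-faces of a 3-face. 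Setting $p := \alpha_3'/\alpha_3$ and combining the two estimates on $S$ yields
$$\bigl(7 - p - \alpha_3^{(2)}\bigr)\alpha_3 \le C(n-1)\alpha_1.$$

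Next I would use that $P$, being almost simple, is edge-simple: the link of every edge is an $(n-2)$-simplex, and hence each edge lies in exactly $\binom{n-1}{2}$ 3-faces. Double counting (edge, 3-face) incidences gives $\alpha_1\binom{n-1}{2} = \alpha_3\alpha_3^{(1)}$, where $\alpha_3^{(1)}$ is the average number of edges of a 3-face. Substituting the resulting expression for $\alpha_1$ into the inequality above collapses everything to the single estimate
$$(n-2)\bigl(7 - p - \alpha_3^{(2)}\bigr) \le 2C\alpha_3^{(1)}.$$

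The last step is to insert explicit upper bounds for $p$, $\alpha_3^{(2)}$, and $\alpha_3^{(1)}$. Lemma~\ref{lemma:badgood3face} controls $p$, while the Nikulin--Khovanskii estimate (Theorems~\ref{theorem:Nikulin} and \ref{theorem:Khovanskii}) applied with $(k,i) = (3,2)$ and $(3,1)$ controls the two averages. The three bounds approach $3/4$, $6$, and $12$ respectively as $n\to\infty$, so after clearing denominators the inequality becomes a cubic polynomial inequality in $n$ of the form $n^{3} - (72 + 96C)\,n^{2} + O(n) < 0$. Checking that this cubic is already positive at $n = 96C + 68$ (for each of the two parities of $n$) then yields the stated bound.

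The main obstacle is purely computational. The asymptotic leading-order calculation only produces the weaker bound $n \lesssim 96C + 72$; extracting the exact constant $68$ requires carefully tracking the lower-order corrections coming from Lemma~\ref{lemma:badgood3face} and from the parity-dependent form of the Nikulin--Khovanskii estimate, and handling the even and odd cases separately. Once the main inequality above is in place, however, the rest is elementary arithmetic.
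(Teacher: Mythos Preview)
The paper does not actually prove Lemma~\ref{lemma:inequality}: it is quoted verbatim from Prokhorov \cite{Prokhorov}, with only the remark that his argument is purely combinatorial and therefore carries over to generalized polytopes. Your sketch is precisely Prokhorov's proof. The double count of the total weight $S$, the identity $\alpha_1\binom{n-1}{2}=\alpha_3\alpha_3^{(1)}$ coming from edge-simplicity, the reduction to the single inequality $(n-2)\bigl(7-p-\alpha_3^{(2)}\bigr)\le 2C\alpha_3^{(1)}$, and the final substitution of the Nikulin--Khovanskii bounds together with Lemma~\ref{lemma:badgood3face} are exactly the steps in \cite{Prokhorov}; the remaining arithmetic, including the parity split, is carried out there and yields the constant $68$.

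One small point worth tightening in your write-up: when you pass from $S\ge\sum_{\text{good}}(7-k(\Gamma^3))$ to the displayed inequality, you are implicitly using that the bad bipyramid has \emph{exactly} six $2$-faces (not at most six), so that $\sum_{\text{good}}k(\Gamma^3)=\alpha_3\alpha_3^{(2)}-6\alpha_3'$ is an equality; you state this but it is the one place where the specific combinatorics of the bad face enters. Also note that the inequality $(n-2)(7-p-\alpha_3^{(2)})\le 2C\alpha_3^{(1)}$ is only useful once $7-p-\alpha_3^{(2)}>0$, which the explicit Nikulin bounds guarantee only for $n$ moderately large; this is harmless since you are arguing by contradiction assuming $n\ge 96C+68$, but it should be said.
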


\subsection{Proof of Theorem \ref{theorem:general_case}}
To prove Theorem \ref{theorem:general_case} we will use lemma
\ref{lemma:inequality}. In this section, we will assign dihedral angles of
3-faces of $P$ with weights so that the conditions of lemma
\ref{lemma:inequality} hold for $C = 29/3$. By lemma \ref{lemma:inequality}: $n < 96 \times 29/3 +
66 = 996$, which is the statement of theorem \ref{theorem:general_case}.
\\

With every unordered set of 2-faces $\{ \Gamma_1, \Gamma_2,...,\Gamma_s\} = K$
of a fixed 3-face $F$ of a fundamental polyhedron $P$ we associate a sub-diagram
$S(F,K)$ of the diagram $S$ of $P$ which is generated by the vertices
corresponding to $(n-1)$-faces of $P$ which either contain $F$ or intersect $F$
in a 2-face from K. The vertices of $S(F,K)$ corresponding to the faces of the
second type are called \emph{marked}. A diagram of a dihedral angle
$(F,\{\Gamma_1, \Gamma_2\})$ of a 3-face $F$ of $D$ is defined to be the
sub-diagram $S(F,K)$, where $K = \{ \Gamma_1, \Gamma_2\}$. This is an elliptic
sub-diagram of rank $n-1$, since it is diagram of the stabilizer of the common
edge of $\Gamma_1$ and $\Gamma_2$.

To each dihedral angle $(F,\{\Gamma_1, \Gamma_2\})$ of $P$ assign weight 1 if
the distance between marked vertices in its diagram is at most 7, 1/3 if the
inequality $7 < d(u,v) \le 15$ holds for marked vertices $u$ and $v$ in
$(F,\{\Gamma_1, \Gamma_2\})$, and 0 otherwise.

The following proposition is a corollary from the classification of Euclidean
Coxeter diagrams (see, for example, \cite{Vinberg}).
\begin{prop} \label{prop:EuclideanCoxeter}
For a connected Euclidean Coxeter diagram $T$ of rank $n$ the number of
unordered pairs of vertices with distance at most $C$ between them does not
exceed $Cn$.
\end{prop}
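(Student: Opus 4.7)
The proposition is asserted as a corollary of the classification of connected Euclidean (affine) Coxeter diagrams, so the natural plan is to verify it directly on that finite list of families. Up to isomorphism the diagrams of rank $n$ are the cycle $\tilde{A}_{n-1}$ on $n$ vertices, the three tree families $\tilde{B}_{n-1}$, $\tilde{C}_{n-1}$, $\tilde{D}_{n-1}$ (each an $n$-vertex tree built from a single ``spine'' path with at most four pendant leaves attached at its ends), and the finitely many exceptional diagrams $\tilde{E}_6, \tilde{E}_7, \tilde{E}_8, \tilde{F}_4, \tilde{G}_2$ (each having at most nine vertices). Write
$$N(C) = \#\{\{u,v\} : u \neq v,\ d(u,v) \leq C\}$$
for the quantity to be bounded above by $Cn$.

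I would first dispose of the cycle case. In $\tilde{A}_{n-1}$ every vertex has exactly $\min(2C,\,n-1)$ other vertices within graph distance $C$, so summing over the $n$ vertices and dividing by $2$ gives $N(C) = \min(Cn,\,\binom{n}{2})$. The first quantity is $Cn$ trivially, while $N(C) = \binom{n}{2}$ forces $C \geq \lfloor n/2 \rfloor$ and then $Cn \geq n(n-1)/2$. The bound is sharp whenever $2C < n$, which tells me that no slack can be wasted in the remaining cases.

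Next I would treat the tree families. For a pure path on $n$ vertices,
$$N(C) = \sum_{d=1}^{\min(C,n-1)} (n-d) \leq Cn - \binom{C+1}{2},$$
leaving $\Theta(C^2)$ slack. For $\tilde{B}_{n-1}$, $\tilde{C}_{n-1}$, $\tilde{D}_{n-1}$ I would split pairs by type (spine--spine, spine--leaf, leaf--leaf); each of the at most four pendant leaves then contributes only $O(C)$ additional pairs beyond the path total, which is absorbed by the available slack. The five exceptional diagrams are handled by direct enumeration of distance multiplicities for each $C$ up to the diameter.

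The main obstacle will be uniformity of the bookkeeping in the branched case $\tilde{D}_{n-1}$: the bound is already tight at the star $\tilde{D}_4 = K_{1,4}$ with $C=2$ (where $N(2)=10=2\cdot 5$) as well as at the cycle $\tilde{A}_{n-1}$ whenever $2C < n$, so no estimate can afford to lose more than lower-order terms. Tracking the exact deficit coming from a spine of $n-k$ vertices (where $k$ is the number of pendant leaves) and matching it against the extra pairs the leaves create, uniformly in $n$ and $C$, is the only combinatorially nontrivial part of the argument.
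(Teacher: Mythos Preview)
The paper does not actually give a proof of this proposition: it simply asserts it as ``a corollary from the classification of Euclidean Coxeter diagrams'' and cites Vinberg. Your plan is precisely the natural way to unpack that one-line justification, namely to run through the affine list $\tilde{A}_{n-1}$, $\tilde{B}_{n-1}$, $\tilde{C}_{n-1}$, $\tilde{D}_{n-1}$, $\tilde{E}_6$, $\tilde{E}_7$, $\tilde{E}_8$, $\tilde{F}_4$, $\tilde{G}_2$ and check the bound on each. Your computations for the cycle and for the path are correct, and you have correctly located the two places where the inequality is tight (the cycle whenever $2C<n$, and the star $\tilde{D}_4$ at $C=2$), which is exactly why no uniform slack can be introduced.

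The only point worth flagging is the branched-tree case you already single out as the nontrivial part. For $\tilde{D}_{n-1}$ the cleanest way to finish is not to compare ``spine deficit'' against ``leaf excess'' in the aggregate, but to observe that the tree embeds isometrically into the cycle $\tilde{A}_{n+1}$ on $n+2$ vertices (fold the four leaves out to extend the spine by one vertex at each end and close up); every pair of vertices in the tree then corresponds to a pair in the cycle at the same or smaller distance, whence $N_{\tilde{D}_{n-1}}(C)\le N_{\tilde{A}_{n+1}}(C)\le C(n+2)$. A direct count then recovers the missing $2C$. Alternatively, one can simply verify $p_1+\cdots+p_d\le dn$ inductively in $d$ for each of the three tree shapes; either route avoids the uniform bookkeeping you were worried about. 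The exceptional diagrams are a finite check, as you say. So your proposal is correct and is essentially what the paper's citation is standing in for.
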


A \emph{complete diagram} of a 3-face $F$ is defined to be the diagram $S(F,K)$,
where $K$ is the set of all 2-faces of $F$. Among all possible sub-diagrams
$S(F,K') \subset S(F,K)$, where $K' \subset K$, consider the following types

\begin{figure}
   \centering
   \def\svgwidth{\columnwidth}
   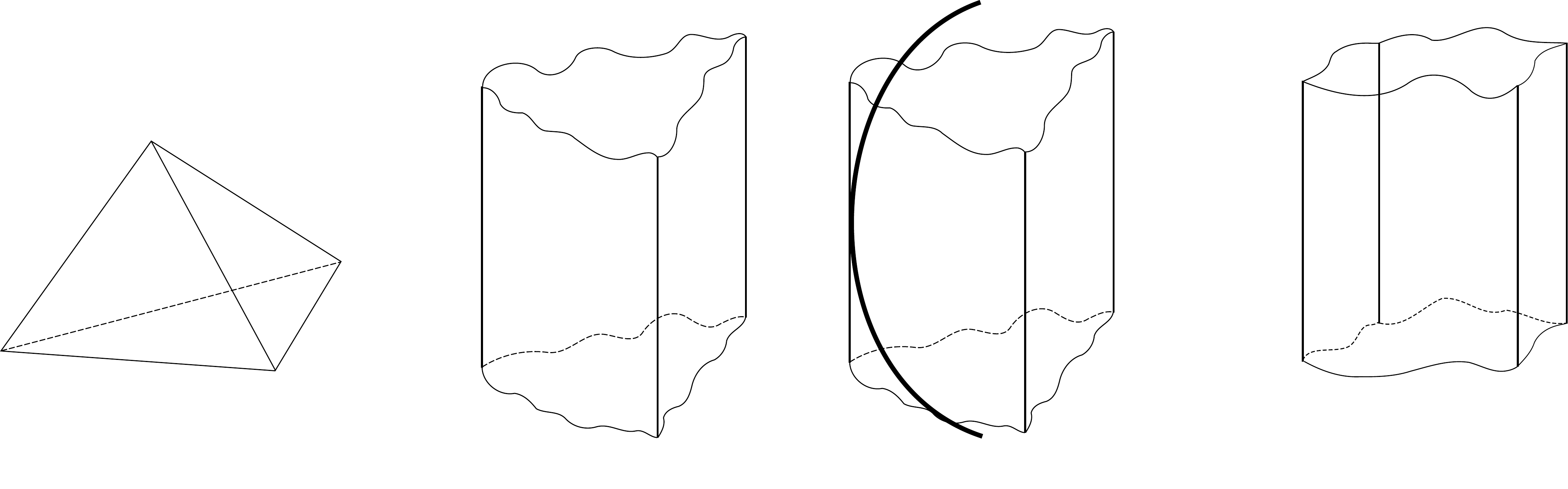
   \caption{}
\end{figure}

\begin{itemize}
\item \emph{Type 1:} $K'$ contains four 2-faces. Every indefinite sub-diagram of
$S(F,K')$ contains all marked vertices (since any sub-diagram of $S(F,K')$
containing at most 3 marked vertices lies in stabilizer of one of the vertices,
thus it is finite if the vertex is finite or it is Euclidean if the vertex is
infinite). A diagram of a dihedral angle of $F$ is obtained from $S(F,K')$
excluded two marked vertices.
\item \emph{Type 2:} $K'$ contains three 2-faces, any two of which intersect at an edge. The diagram $S(F,K')$ is
indefinite (since it does not fix any vertex, it cannot be finite or Euclidean).
A diagram of a dihedral angle of $F$ is obtained from $S(F,K')$ excluded one
marked vertex.
\item \emph{Type 3:} $K'$ contains three 2-faces, two of which intersect at a vertex
at infinite, the other two pair of 2-faces intersect at edges. The diagram
$S(F,K')$ is indefinite (since it does not fix any vertex, it cannot be finite
or Euclidean). A diagram of a dihedral angle of $F$ is obtained from $S(F,K')$
excluded one from two marked vertices corresponding to two 2-face intersecting
at infinity. If the third marked vertex is excluded from $S(F,K')$, we obtain a
diagram of a dihedral angle of $F$.
\item \emph{Type 4:} $K'$ contains four 2-faces which can be divided into two groups
$K'_1$ and $K'_2$. Each of $K'_1$ and $K'_2$ contains two opposite 2-faces. It
is clear that $S(F,K'_1)$ and $S(F,K'_2)$ are indefinite sub-diagrams, and any
indefinite sub-diagram of $S(F,K')$ must contain both marked vertices from some
pair. A diagram of a dihedral angle of $F$ is obtained from $S(F,K')$ excluded
two marked vertices from different pairs $K'_1$ and $K'_2$.
\end{itemize}
The sum $\sigma(F)$ of weights on a face $F$ is defined to be the sum of weights
of all sub-digrams of dihedral angles of $F$ contained in $S(F,K)$. Similarly, we
can define the sum of weights on a sub-diagram $S(F,K') \subset S(F,K)$.

\begin{lemma} \label{lemma:weightof3face}
The following statements are true:
\begin{enumerate}
\item \label{type1} If a complete diagram of a face $F$ contains a sub-diagram $S(F,K')$ of
type 1, then $\sigma(F) \ge 3$.
\item \label{type2} If a complete diagram of a face $F$ contains a sub-diagram $S(F,K')$ of
type 2, then $\sigma(F) \ge 2$.
\item \label{type3} The sum of weights on sub-diagram of type 3 is not less than 1.
\item \label{type4}The sum of weights on sub-diagram of type 4 is not less than 1/3.
\end{enumerate}
\end{lemma}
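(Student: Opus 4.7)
The plan is a case-by-case analysis of the four types, leaning on the classification of quasi-Lanner Coxeter diagrams and in particular the diameter bound of $8$, with at most one pair of vertices attaining distance exactly $8$.

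For each type I would first enumerate those pairs $\{\Gamma_i,\Gamma_j\}\subseteq K'$ that meet in an edge, since these index the dihedral angles of $F$ whose diagrams lie inside $S(F,K')$. For such a pair the diagram $S(F,\{\Gamma_i,\Gamma_j\})$ is obtained from $S(F,K')$ by deleting the other marked vertices, and is by construction elliptic of rank $n-1$ (the stabilizer of the common edge of $\Gamma_i$ and $\Gamma_j$). The weight assigned to the dihedral angle is $1$ if the remaining two marked vertices are within distance $7$ in this elliptic diagram, $1/3$ if the distance lies in $(7,15]$, and $0$ otherwise.

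For Type 1, the hypothesis forces $S(F,K')$ to be quasi-Lanner of rank $n+1$: dropping a marked vertex yields a stabilizer of a vertex of $F$ (elliptic if finite, parabolic if infinite), and dropping a non-marked vertex gives a sub-diagram still contained in such a stabilizer. By the diameter bound and the ``at most one pair at distance $8$'' property, at least five of the six pairs of marked vertices in $S(F,K')$ have distance at most $7$. The $n-3$ non-marked vertices form a connected elliptic core (the stabilizer of $F$) through which any minimal path between marked vertices can be routed, so the distance is preserved when we pass from $S(F,K')$ to a dihedral-angle sub-diagram. Consequently at least five of the six dihedral angles carry weight $1$, and $\sigma(F)\ge 5\ge 3$. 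Type 2 runs in parallel: three marked vertices yield $\binom{3}{2}=3$ dihedral angles, at most one at distance $8$, so at least two contribute weight $1$ and $\sigma(F)\ge 2$. Type 3 contributes two dihedral angles from the edge-meeting pairs plus one additional diagram (obtained by removing the third marked vertex) associated with the pair meeting at infinity; the diameter bound forces their weights to sum to at least $1$. For Type 4, four dihedral-angle diagrams arise from choosing one vertex to delete from each of the two opposite pairs, and the quasi-Lanner structure of $S(F,K'_1)$ and $S(F,K'_2)$ guarantees that at least one of the four marked-pair distances is at most $15$, yielding weight at least $1/3$.

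The main obstacle I expect is verifying that removing marked vertices from $S(F,K')$ does not inflate the distance between the remaining marked vertices past the thresholds $7$ and $15$. This reduces to a classification-theoretic check, using Bourbaki's lists of finite and affine Coxeter diagrams, that the non-marked core always offers a geodesic route between any two marked vertices. Type 4 is the most delicate, because the weight margin is smallest and the ``opposite pair'' structure can produce less standard quasi-Lanner configurations whose diameters have to be examined with care.
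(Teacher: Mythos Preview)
Your central claim---that $S(F,K')$ itself is quasi-Lanner---is false, and this is the step on which your whole Type~1--3 argument rests. For Type~1 the diagram $S(F,K')$ has $n+1$ vertices: the $n-3$ non-marked vertices coming from the hyperfaces containing $F$, together with the four marked ones. Quasi-Lanner diagrams have rank at most $10$, so for the dimensions of interest $S(F,K')$ is far too large to be quasi-Lanner. Your justification (``dropping a non-marked vertex gives a sub-diagram still contained in such a stabilizer'') is wrong: after deleting a non-marked vertex all four marked vertices remain, and a diagram containing all four marked vertices does not sit inside the stabilizer of any vertex of $F$---indeed it is typically still indefinite. Relatedly, the $n-3$ non-marked vertices form the elliptic diagram of the stabilizer of $F$, but there is no reason this diagram is connected, so your ``connected elliptic core'' route for geodesics also fails.

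The correct move, which the paper makes, is to pass to a \emph{minimal} indefinite sub-diagram $M\subset S(F,K')$. Minimality forces every proper sub-diagram of $M$ to be elliptic or parabolic, so $M$ really is a connected quasi-Lanner diagram; and since (for Types~1--3) every indefinite sub-diagram of $S(F,K')$ must contain all the marked vertices, $M$ contains them all. One then counts pairs of marked vertices that are joined in $M$ by a path avoiding the other marked vertices: such a path survives after deleting those other marked vertices, so it bounds the distance in the relevant dihedral-angle diagram. This is the mechanism that controls distances, not a ``core'' argument. For Type~4 you are also missing the key non-orthogonality step: one finds two minimal indefinite (hence quasi-Lanner) sub-diagrams $M$ and $N$, one for each opposite pair, and must rule out $M\perp N$ by observing that otherwise the four marked generators would span a Euclidean $D_\infty\times D_\infty$, contradicting the Type~4 configuration; only then can one concatenate paths in $M$ and $N$ to get the bound $d(u,v)\le 15$.
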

\begin{proof}
for \ref{type1}, \ref{type2}, \ref{type3}: any indefinite sub-diagram of
$S(F,K')$ contains all marked vertices. Consider the minimum indefinite
sub-diagram $M \subset S(F,K')$. If a non-marked vertex is excluded from $M$, we
cannot get an infinite diagram since $M$ is minimal. Thus we get a finite or
Euclidean sub-diagram. Obviously, if any marked vertex is excluded from $M$, one
obtains a finite or an Euclidean sub-diagram. That means $M$ is a quasi-Lanner
diagram. And since $M$ is minimal, it is connected.

Notice that since $M$ is connected graph, there are at least three pair of
marked vertices in $S(F,K')$ of type 1 which are joined in $M$ by a path
containing no other marked vertices from $S(F,K')$. Thus, by the classification
of quasi-Lanner diagram (see \cite{Bourbaki}) for all such pairs $u, v$ we have $d(u,v) \le
7$. In case \ref{type2} there exist two pairs satisfying this property, and in
case \ref{type3} at least one such pair exists.

For case \ref{type4}, by similar argument as above we have each pair $K'_1,
K'_2$ of marked vertices is contained in a quasi-Lanner diagram which has no
marked vertices from another pair. Let $M$ and $N$ be such quasi-Lanner
diagrams. If $M$ and $N$ are orthogonal then four marked vertices from type 4
would form subgroup $D_{\infty} \times D_{\infty}$ which is Euclidean. This
means the 4 2-faces of $F$ in type 4 meet at vertex at infinity. But this is not
the case we consider in type 4. Thus, $M$ and $N$ are joined by an edge or they
have common vertex. Therefore there exist marked vertices $u$ and $v$ from
different pairs which can be joined by a path not containing other mark
vertices. By the classification of quasi-Lanner diagram we see that $d(u,v) \le
15$. From the definition of weights of dihedral angle the sum of weights on
sub-diagram of type 4 is not less than 1/3.
\end{proof}
We finish the proof of the theorem \ref{theorem:general_case} by proving the
following statement:
\begin{prop} \label{prop:checkcondition}
The fundamental domain $P$ satisfies the conditions of lemma
\ref{lemma:inequality} for $C = 29/3$.
\end{prop}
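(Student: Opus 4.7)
The plan is to verify conditions \ref{condition1} and \ref{condition2} of Lemma \ref{lemma:inequality} with $C = 29/3$; once both hold, the conclusion $n < 96 \cdot (29/3) + 68 = 996$ follows directly.

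For condition \ref{condition1}, I would fix an edge $\Gamma^1$ of $P$ and observe that the $3$-faces of $P$ containing $\Gamma^1$ are in bijection with unordered pairs of facets among the $n-1$ facets through $\Gamma^1$: for each such pair $\{F_1, F_2\}$ the corresponding $3$-face $F$ is the intersection of the remaining $n-3$ facets, and its two $2$-faces meeting at $\Gamma^1$ are $F \cap F_1$ and $F \cap F_2$. Consequently, for every $3$-face $F$ containing $\Gamma^1$ the diagram of the dihedral angle at $\Gamma^1$ is the same fixed rank-$(n-1)$ sub-diagram $D \subset S$, namely the Coxeter diagram of the edge stabilizer $W_{\Gamma^1}$, only the pair of marked vertices changes to $\{F_1, F_2\}$. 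Thus $\sigma(\Gamma^1)$ is exactly the weighted count of unordered pairs of vertices of $D$. Each connected component of $D$ is elliptic or Euclidean, and in both cases the method of Proposition \ref{prop:EuclideanCoxeter} applies component-by-component to yield that the number $A$ of pairs at distance $\le 7$ satisfies $A \le 7(n-1)$ and the number $A+B$ at distance $\le 15$ satisfies $A + B \le 15(n-1)$. Therefore
\[ \sigma(\Gamma^1) \;=\; A + \tfrac{1}{3}B \;=\; \tfrac{2}{3}A + \tfrac{1}{3}(A+B) \;\le\; \tfrac{14}{3}(n-1) + 5(n-1) \;=\; \tfrac{29}{3}(n-1), \]
which is precisely condition \ref{condition1}.

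For condition \ref{condition2}, fix a good $3$-face $\Gamma^3$ with $k$ two-faces. The case $k \ge 7$ is trivial since $\sigma(\Gamma^3) \ge 0$. For $k = 4$, $\Gamma^3$ is a combinatorial tetrahedron, so every one of its vertices is automatically finite (each vertex link is a $2$-simplex), hence the stabilizer of every vertex of $\Gamma^3$ is elliptic. Any sub-diagram of the complete diagram $S(\Gamma^3, K)$ containing at most three marked vertices then lies inside some vertex stabilizer and is elliptic, so every indefinite sub-diagram must contain all four marked vertices; this is the setup of type 1 (with $K' = K$), and part \ref{type1} of Lemma \ref{lemma:weightof3face} gives $\sigma(\Gamma^3) \ge 3 = 7-4$. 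For $k = 5$ I would enumerate the combinatorial types of almost-simple $3$-polytopes with five $2$-faces (essentially the triangular prism and the square pyramid) and in each case exhibit three pairwise adjacent $2$-faces forming a type-2 sub-diagram, applying part \ref{type2} of Lemma \ref{lemma:weightof3face} to obtain $\sigma(\Gamma^3) \ge 2 = 7-5$. For $k = 6$ I would similarly enumerate the good almost-simple $3$-polytopes with six $2$-faces, excluding the bad triangular bi-pyramid, and in each case locate either a type-2 triple, or a type-3 triple in which two $2$-faces meet at an infinite vertex of $\Gamma^3$, or enough disjoint type-4 configurations of opposite pairs, to accumulate $\sigma(\Gamma^3) \ge 1 = 7-6$ via parts \ref{type2}, \ref{type3}, or \ref{type4} of Lemma \ref{lemma:weightof3face}.

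The main obstacle will be the combinatorial casework for condition \ref{condition2} when $k = 5$ and especially $k = 6$: I have to enumerate all almost-simple $3$-polytopes with the prescribed face count, keep track of which vertices are finite and which are infinite (so as to identify the possible marked/unmarked structures of $S(\Gamma^3, K)$), exclude the bad bi-pyramid, and in each remaining case exhibit a sub-diagram of the appropriate type from Lemma \ref{lemma:weightof3face}. By contrast, condition \ref{condition1} is a clean weighted counting argument relying only on the bijection between $3$-faces at $\Gamma^1$ and pairs of facets at $\Gamma^1$ together with the bound of Proposition \ref{prop:EuclideanCoxeter}. Once both conditions are in place, Lemma \ref{lemma:inequality} with $C = 29/3$ immediately yields Theorem \ref{theorem:general_case}.
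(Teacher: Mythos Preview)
Your overall strategy matches the paper's, and your verification of condition~\ref{condition1} is correct (indeed, your weighted-count identity $A + \tfrac13 B = \tfrac23 A + \tfrac13(A+B)$ is cleaner than the paper's arithmetic). The treatment of $k\ge 7$ and of the tetrahedron ($k=4$) is also fine.

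The gap is in your $k=5$ plan. You assert that for both the triangular prism and the square pyramid one can ``exhibit three pairwise adjacent $2$-faces forming a type-2 sub-diagram.'' For the prism this works (the three lateral rectangles), but for the square pyramid it fails: any three $2$-faces that pairwise share edges necessarily have a \emph{common vertex}. Concretely, with apex $A$ and base $BCDE$, two triangular faces share an edge only if they are adjacent (hence share $A$ and a base vertex), and adding the base then forces a common base vertex; two opposite triangles meet only at $A$, so they cannot both be in a pairwise-adjacent triple. Since all facets in $S(F,K')$ then pass through that common vertex, the diagram lies in a vertex stabilizer and is elliptic (or parabolic), not indefinite, so Lemma~\ref{lemma:weightof3face}\,\ref{type2} does not apply. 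The paper handles the square pyramid with two type-3 configurations instead: each pair of \emph{opposite} triangular faces meets only at the infinite apex $A$, and together with the base they form a type-3 triple; the two such triples give $\sigma(F)\ge 1+1 = 2 = 7-5$. You should expect the same phenomenon in the $k=6$ casework (e.g.\ the cube has no type-2 triple either, and the paper uses three type-4 quadruples to reach $\sigma\ge 1$), so the enumeration there will require types~3 and~4 essentially, not merely as a fallback.
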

\begin{proof}
We need to check conditions \ref{condition1} and \ref{condition2} from lemma
\ref{lemma:inequality}. The diagram of an edge $\Gamma^1$ of $P$ is elliptic of
order $n-1$ (an edge in $P$ is an intersection of $n-1$ codimention 1 faces of
$P$). The diagram of a dihedral angle $S(F,\{\Gamma_1, \Gamma_2\})$ of a 3-face
$F$ such that $\Gamma_1 \cap \Gamma_2 = \Gamma^1$ is completely determined by
two marked vertices in the diagram of $\Gamma^1$. By proposition
\ref{prop:EuclideanCoxeter} the number of pairs of vertices in the diagram of
the edge which are at distance at most 15 (respectively, at most 7) cannot
exceed $15(n-1)$ (respectively, $7(n-1)$).  We see that the sum of wights at the
edge is at most $7(n-1) + 15(n-1)/3 = 29(n-1)/3$, and so condition
\ref{condition1} of lemma \ref{lemma:inequality} holds.

It is sufficient to verify condition \ref{condition2} of lemma
\ref{lemma:inequality} for 3-faces which contain no more than six 2-faces. It is not difficult to find all of them (it was done in Prokhorov's paper \cite{Prokhorov}). Figure 3 includes complete list,
except a cone over a pentagon, which is not a direct product of simplexes and so
can not be a 3-face for $P$.

\begin{figure}
  \centering
   \def\svgwidth{\columnwidth}
    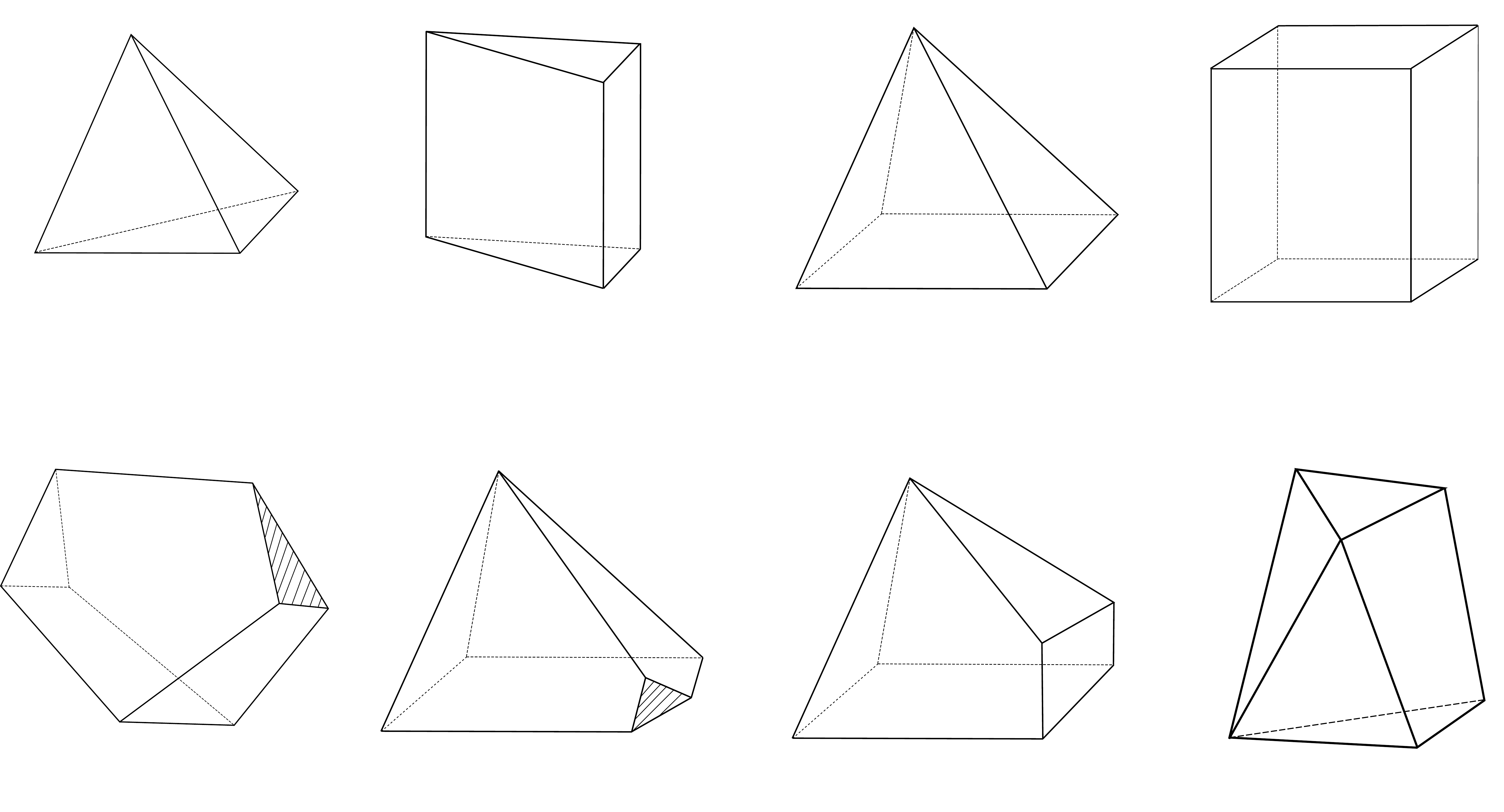
    \caption{}
\end{figure}

We will check condition \ref{condition2} of lemma \ref{lemma:inequality} for
each case listed above.
\begin{itemize}
\item[a.] The complete diagram of this face is a sub-diagram of type 1, and
$\sigma(F) \ge 3$ by lemma \ref{lemma:weightof3face}.
\item[b] The sub-diagram $S(F,(ACFD, CBEF, ABED))$ is of type 2, and $\sigma(F)
\ge 2$.
\item[c.] Two sub-diagrams of type 3 are
$$S(F, (AEB, BCDE, ADC)) \text{ and } S(F, (ADE,BCDE, ACB)),$$
and $\sigma(F) \ge 2$.
\item[d.] There are three sub-diagrams of type 4, each consists of 4 consecutive
2-faces of the cube, and $\sigma(F) \ge 3\times 1/3 = 1$.
\item[e.] For this case there is a sub-diagram of type 2 corresponding to 2-faces
which intersect the shaded triangle, and $\sigma(F) \ge 2$.
\item[f.] This case is similar to e.
\item[g.] The sub-diagram $S(F, (ABEF, BEDC, CDGA))$ is of type 3, and $\sigma(F)
\ge 1$.
\item[h.] The sub-diagram $S(F, (ACD, ADEB, BEFC))$ is of type 3, and $\sigma(F)
\ge 1$.
\end{itemize}
Thus, proposition \ref{prop:checkcondition} and so theorem
\ref{theorem:general_case} is proved.
\end{proof}

\section{Examples and discussion}
So far we consider relatively hyperbolic Coxeter groups of type $HM$ with maximal flats of codimension 1. Naturally we can ask if a Coxeter group $W$ of type $HM$ is relatively hyperbolic relative to its maximal affine subgroups, then must it have a special subgroup which is affine of codimension 1? The answer for this question is no. For $n = 4$ there is an example in \cite{DJL} of 4-dimensional right-angled Coxeter group which is relatively hyperbolic and has maximal abelian subgroup of dimension 2. Here in this section we present an example of relatively hyperbolic Coxeter
group with fundamental domain a 12-cube with maximal flats of dimension 6.The idea is that from the diagram for the
right-angled cube consisting of $12$ vertical edges each labeled $\infty$ we add
edges between some vertices so that the link $L$ of new Coxeter group is the
same type as for the right-angled cube.

In the following example there are 8 vertical edges labeled $\infty$ dividing
into 4 groups $A_1, A_2, B_1, B_2$, each of $A_1$ and $A_2$contains 1 edge, $B_1$ and $B_2$ contain 3 edges. The upper vertex of
$A_1$ is connected to all three upper vertices of $B_1$, the lower vertice of
$A_1$ is connected to all three lower vertices of $B_2$. The upper vertice of
$A_2$ is connected to all three upper vertices of $B_2$, and the lower vertice
of $A_2$ is connected to all three lower vertices of $B_1$.


Using criteria from \cite{Caprace} we can check that this group is relatively hyperbolic Coxeter group with a maximal flat of codimension 6  and a maximal flat of codimension 2 (sub-cubes generated by $A_1 \cup A_2$ or $B_1 \cup B_2$)

It is not known whether the above group acts on hyperbolic space or not. Even in the non-relative case there are not many examples of (word) hyperbolic Coxeter groups which are not actual hyperbolic groups (i.e do not act on hyperbolic space). One class of examples was introduced recently by R. Greene in \cite{Greene}. His technique can be applied to produce similar example for relatively hyperbolic Coxeter groups. Here is a rough idea how to produce such an example. Let $X$ be an octahedron, $Y = X \times I$ be the product of $X$ crossed an interval. If we cone off the two copies $X_1, X_2$ of $X$ in $Y$ we get a 3-sphere. We will triangulate this sphere so that one octahedron, $X_1$, is remained (this will be the nerve of a codimension 1 flat) and there is no other empty square except for the ones in $X_1$. For the cone on $X_2$, apply Przytycki-Swiatkowski \cite{PS} subdivision process. The triangles in $X_2$ are subdivided and there are new vertices in $X_2$, we connect the vertices in $X_1$ with appropriate new vertices in $X_2$ to have a triangulation of 3-sphere. You can check that this triangulation doesn't have any empty squares rather than the ones in $X_1$. And so this is the nerve of a relatively hyperbolic right-angled Coxeter group of type $HM$ with one codimension 1 flat. The fundamental domain of this Coxeter group is not embedded into 4 dimensional hyperbolic space by Greene's argument \cite{Greene}.

\end{document}